\def\NAT@def@citea{\def\@citea{\NAT@separator}}
\theoremstyle{plain}
\newtheorem{theorem}{Theorem}[section]
\newtheorem{lemma}[theorem]{Lemma}
\newtheorem{corollary}[theorem]{Corollary}
\newtheorem{proposition}[theorem]{Proposition}
\theoremstyle{definition}
\DeclareMathOperator{\Circ}{Circ}
\theoremstyle{remark}
\newtheorem{remark}{Remark}
\begin{document}


\title{The $p$-norm of circulant matrices}

\author{
\name{Ludovick Bouthat\textsuperscript{a}\thanks{E-mail addresses: Ludovick.Bouthat.1@ulaval.ca (L. Bouthat), khare@iisc.ac.in (A. Khare), Javad.Mashreghi@mat.ulaval.ca (J. Mashreghi), Frederic.Morneau-Guerin@teluq.ca (F. Morneau-Gu\'erin).}, Apoorva Khare\textsuperscript{b,c}, Javad Mashreghi\textsuperscript{a}, and Fr\'ed\'eric Morneau-Gu\'erin\textsuperscript{$\ast$,a,d}}
\affil{\textsuperscript{$\ast$} Corresponding author.\linebreak \textsuperscript{a}D\'epartement de math\'ematiques et de statistique, Universit\'e Laval, 1045, avenue de la M\'edecine, Qu\'ebec (Qu\'ebec), G1V\;0A6, Canada.\linebreak \textsuperscript{b} Department of Mathematics, Indian Institute of Science, Bengaluru -- 560012, India.\linebreak
\textsuperscript{c} Analysis and Probability Research Group, Bengaluru -- 560012, India. \linebreak \textsuperscript{d}D\'epartement \'Education, Universit\'e T\'ELUQ, 455 rue du Parvis, Qu\'ebec (Qu\'ebec), G1K\;9H6, Canada.}
}

\maketitle

\begin{abstract}
In this note we study the induced $p$-norm of circulant matrices $A(n,\pm a, b)$, acting as operators on the Euclidean space $\mathbb{R}^n$. For circulant matrices whose entries are nonnegative real numbers, in particular for $A(n,a,b)$, we provide an explicit formula for the $p$-norm, $1 \leq p \leq \infty$. The calculation for $A(n,-a,b)$ is more complex. The 2-norm is precisely determined. As for the other values of $p$, two different categories of upper and lower bounds are obtained. These bounds are optimal at the end points (i.e. $p=1$ and $p = \infty$) as well as at $p=2$.
\end{abstract}

\begin{keywords}
Circulant matrices; $p$-norms; doubly stochastic matrices.
\end{keywords}

\begin{amscode}
15B05, 47A30
\end{amscode}

\section{Introduction and historical background}

Let $\alpha_1,\dots,\alpha_n$ be a sequence of real numbers. We denote by $A$ the circulant matrix defined as
\[
A = \Circ(\alpha_1, \alpha_2, \dots, \alpha_n) =
\begin{pmatrix}
\alpha_1 & \alpha_2 & \alpha_3 & \cdots & \alpha_n\\
\alpha_n & \alpha_1 & \alpha_2 & \cdots & \alpha_{n-1}\\
\alpha_{n-1} & \alpha_n & \alpha_1 & \cdots & \alpha_{n-2}\\
\vdots & \vdots & \vdots & \ddots & \vdots \\
\alpha_{2} & \alpha_3 & \alpha_4 & \cdots & \alpha_{1}
\end{pmatrix}.
\]

We endow $\mathbb{R}^n$ with the $p$-norm
\[
\| (x_1,x_2,\dots,x_n) \|_p = \left( \sum_{k=1}^{n} |x_k|^p \right)^{1/p}, \qquad 1 \leq p \leq \infty,
\]
and then we may interpret $A$ as an operator on $\mathbb{R}^n$ whose induced $p$-norm is given by
\[
\|A\|_p = \sup_{x \ne 0} \frac{\|Ax\|_p}{\|x\|_p}.
\]
In this note, our goal is to find $\|A\|_p$ or at least good estimations for it if the precise value cannot be determined.

In 1846, circulant matrices appeared somewhat implicitly in the work of Eugène Catalan \cite{1846-Catalan}. This appears to be -- according to Thomas Muir \cite[Vol 2., Ch. 14]{1906-Muir}, an authority on the early history of linear algebra and matrix theory --  their very first appearance in published mathematical work. An entire century passes, however, before the systematic study of circulant matrices begins to gain momentum. They have been put on a firm footing with the publication of a monograph by Davis \cite{1979-Davis} in 1979. It is not without reason that, in recent years, circulant matrices are still a topic of focus in matrix theory. Indeed, it is known that circulant and block-circulant matrices have a wide range of applications. For instance, they have been used in solving various ordinary and partial differential equations \cite{2013-Chen-Lin-Chen, 2005-Delgado-Romero-Rovelia-Vilamajo, 1983-Wilde}, in image processing \cite{2008-Wittsack-Wohlschlager-Ritzl-Kleiser-Cohnen-Seitz-Modder}, and in signal processing \cite{2008-Andrecut}.

In the last two decades, many scholars studied the norms of special circulant matrices whose entries are well-known number sequences. Due to the role that such matrices play in various disciplines such as statistics, numerical analysis, operator theory, mathematical physics and quantum information theory, it has become a very interesting research subject in matrix analysis and many authors have obtained some compelling results. Recently, researching the norms of some circulant-type matrices (e.g. $r$-circulant matrices, almost circulant matrices, geometric circulant matrices, skew-circulant matrices) has been one of the most interesting and active research areas in computational mathematics. It is a difficult task to provide a comprehensive list of contributions. We mention just a few below, which in fact mostly reflect our research interests.

Solak and Bozkurt \cite{2002-Solak-Bozkurt} obtained upper bounds for the entrywise $p$-matrix norm and the matrix norms induced by the vector $p$-norm of almost circulant, Cauchy--Toepliz and Cauchy--Hankel matrices. Bani-Domi and Kittaneh \cite{2008-BaniDomi-Kittaneh} have established two general norm equalities for circulant and skew circulant operator matrices. Kocer \cite{2007-Kocer} obtained norms of circulant, negacyclic and semi-circulant matrices with modified Pell, Jacobsthal and Jacobsthal--Lucas numbers. Solak \cite{2005-Solak} gave upper and lower bounds for the spectral and Frobenius norms of circulant matrices whose entries are classical Fibonacci and Lucas numbers. Ipek \cite{2011-Ipek} investigated some improved estimations for spectral norms of these matrices. The scope of the study initiated by Solak was broadened in various ways. For example, Nalli and Sen \cite{2010-Nalli-Sen} considered circulant matrices whose entries are generalized Fibonacci numbers whereas in \cite{2015-Bahsi, 2017-Chandoul, 2015-He-Ma-Zhang-Wang, 2017-Kome-Yazlik, 2010-Shen-Cen}, upper and lower bounds were obtained for the spectral norms of $r$-circulant matrices whose entries are either classical or generalized Fibonacci and Lucas numbers. Kizilates and Tuglu  \cite{2016-Kizilates-Tuglu} defined a geometric circulant matrix whose entries are the generalized Fibonacci numbers and hyperharmonic Fibonacci numbers, and gave upper and lower bounds for the spectral norms of this matrix. Kocer, Mansour and Tuglu \cite{2007-Kocer-Mansour-Tuglu} have presented the spectral norms and eigenvalues of circulant matrices whose entries are Horadam numbers. In the same vein, Yazlik and Taskara \cite{2013-Yazlik-Taskara} and Liu \cite{2014-Liu} obtained new upper and lower bounds for the spectral norms of an $r$-circulant matrix whose entries are the generalized $k$-Horadam numbers.

\section{Main results}

In this note, our main objective is to study the $p$-norm of a special case of circulant matrices, namely that of the form
\begin{equation}\label{E:defA-}
A(n,a,b)
=
\begin{pmatrix}
a & b & b & \cdots & b\\
b & a & b & \cdots & b\\
b & b & a & \cdots & b\\
\vdots & \vdots & \vdots & \ddots & \vdots \\
b & b & b & \cdots & a
\end{pmatrix},
\end{equation}
where $a,b \in \mathbb{R}$. Via a normalization process, it suffices to consider the following two cases: $A(n,a,b)$ and $A(n,-a,b)$, where $a,b \geq 0$.

Although our interest for matrices of the form $A(n,a,b)$ stems from other studies on doubly stochastic matrices, the results we obtained hold true in a more general setting. Our results show that the negative sign plays a crucial role as the $p$-norms of $A(n,a,b)$ and $A(n,-a,b)$ are entirely different. In fact, in Theorem \ref{T:p-norm-A+}, we show that
\[
\|A(n,a,b)\|_p = (n-1)b+a,
\]
for $1 \leq p \leq \infty$, while Theorem \ref{T:2-norm-A-} states that
\[
\|A(n,-a,b)\|_2 =
\left\{
\begin{array}{lcl}
a+b & \mbox{if} & (n-2)b \leq 2a, \\
   &  & \\
(n-1)b-a   & \mbox{if} & (n-2)b \geq 2a.
\end{array}
\right.
\]

We did not succeed to precisely evaluate $\|A(n,-a,b)\|_p$, for $p \ne 2$. However, using two different methods, we obtained some upper and lower bounds which are optimal at the end points. In Theorem \ref{T:2-norm-A-p}, we show
\[
\left\{
\begin{array}{ccl}
a+b \leq \|A(n,-a,b)\|_p \leq n^{1/2-1/p}(a+b) & \mbox{if} & (n-2)b \leq 2a, \\
   &  & \\
(n-1)b-a \leq \|A(n,-a,b)\|_p \leq n^{1/2-1/p} ((n-1)b-a)   & \mbox{if} & (n-2)b \geq 2a.
\end{array}
\right.
\]
In Theorem \ref{T:3-norm-A-p}, we refine these estimate, albeit with more complex formulas, as
\[
\left\{
\begin{array}{ccl}
a+b \leq \|A(n,-a,b)\|_p \leq \big(a+b\big)^{\frac{2}{p}} \big((n-1)b+a \big)^{1-\frac{2}{p}} & \mbox{if} & (n-2)b \leq 2a, \\
   &  & \\
(n-1)b-a \leq \|A(n,-a,b)\|_p \leq \big((n-1)b-a \big)^{\frac{2}{p}} \big((n-1)b+a\big)^{1- \frac{2}{p}}   & \mbox{if} & (n-2)b \geq 2a.
\end{array}
\right.
\]
All formulas are for $p \geq 2$. But since $A=A(n,\pm a,b)$ is self-adjoint, we have ${\|A\|_p = \|A\|_q}$ where $1/p+1/q=1$ (see for instance \cite[Theorem 5.6.36]{1985-Horn-Johnson}). Hence it is enough to study one of the cases among $p \in [1,2]$ and $p \in [2, \infty]$.

\section{The $p$-norm of the circulant matrix $A(n,a,b)$}

In this section, we find the $p$-norm of the circulant matrix $A(n,a,b)$ for $a, b \geq 0$, and, more generally, of all circulant matrices with nonnegative entries. In the next section, we will consider circulant matrices with negative entries and will immediately notice that the calculations are more involved.

To determine the $p$-norm of $A(n,a,b)$, first note that the vector $x=(1,1,\dots,1)$ tells us
\begin{equation}\label{lower-bound-Anab}
\|A(n,a,b)\|_p \geq \frac{\|A(n,a,b)x\|_p}{\|x\|_p} = (n-1)b + a.
\end{equation}
Now, we show that $x=(1,1,\dots,1)$ is actually a maximizing vector, which means that we can replace $\geq$ by the identity in the above equation. To do so, we need the following inequality which is interesting in its own right.

\begin{lemma} \label{L:p-inegalite}
Let $\alpha_1,\dots,\alpha_n$ be a sequence of nonnegative numbers.  Let $x_1,\dots,x_n$ be another sequence of nonnegative numbers. Let $S_n$ denote the symmetric group of permutations of $\{1,2,\dots,n\}$. Assume $p > 1$. Then
\[
\sum_{\sigma \in S_n} \big( \alpha_{\sigma(1)}x_1 + \cdots + \alpha_{\sigma(n)}x_n \big)^p
\leq
 (n-1)! (\alpha_1 + \dots + \alpha_n)^{p}\left(x_1^p + \cdots + x_n^p \right).
\]
\end{lemma}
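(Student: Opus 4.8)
The plan is to reduce the whole inequality to a single application of Jensen's inequality (equivalently, the convexity of $t \mapsto t^p$ for $p>1$) applied separately to each permutation, followed by an elementary counting argument. Write $S = \alpha_1 + \cdots + \alpha_n$ and assume first that $S > 0$ (the case $S=0$ forces every $\alpha_i = 0$ and both sides vanish). For each fixed $\sigma \in S_n$, the key observation is that the numbers $\alpha_{\sigma(1)}/S, \dots, \alpha_{\sigma(n)}/S$ are nonnegative and sum to $1$, so they form a probability vector. Hence the quantity $\alpha_{\sigma(1)}x_1 + \cdots + \alpha_{\sigma(n)}x_n$ is $S$ times a convex combination of $x_1, \dots, x_n$.

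The central step is then to apply Jensen's inequality to the convex function $t \mapsto t^p$. Concretely, I would show
\[
\Big( \alpha_{\sigma(1)}x_1 + \cdots + \alpha_{\sigma(n)}x_n \Big)^p
= S^p \Big( \sum_{j=1}^n \frac{\alpha_{\sigma(j)}}{S}\, x_j \Big)^p
\leq S^p \sum_{j=1}^n \frac{\alpha_{\sigma(j)}}{S}\, x_j^p
= S^{p-1} \sum_{j=1}^n \alpha_{\sigma(j)}\, x_j^p.
\]
This bounds each summand on the left-hand side by a \emph{linear} expression in the quantities $x_j^p$, which is precisely what makes the subsequent summation tractable.

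The final step is to sum the above over all $\sigma \in S_n$ and swap the order of summation, giving the bound $S^{p-1} \sum_{j=1}^n x_j^p \big( \sum_{\sigma \in S_n} \alpha_{\sigma(j)} \big)$. Here I would invoke the counting fact that, for each fixed index $j$, as $\sigma$ ranges over $S_n$ the value $\sigma(j)$ equals each element of $\{1,\dots,n\}$ exactly $(n-1)!$ times; therefore $\sum_{\sigma \in S_n} \alpha_{\sigma(j)} = (n-1)!\, S$ independently of $j$. Substituting this collapses the right-hand side to $S^{p-1} \cdot (n-1)!\, S \cdot \sum_{j=1}^n x_j^p = (n-1)!\, S^p \sum_{j=1}^n x_j^p$, which is exactly the claimed bound $(n-1)!\,(\alpha_1 + \cdots + \alpha_n)^p (x_1^p + \cdots + x_n^p)$.

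I do not anticipate a genuine obstacle here, as the argument is short once the normalization is spotted; the only thing to be careful about is that all the weights $\alpha_{\sigma(j)}/S$ are nonnegative so that Jensen applies legitimately, which is guaranteed by the hypothesis $\alpha_i \geq 0$. The one conceptual point worth flagging is the choice to normalize by the $\alpha$'s (treating them as the convex weights and the $x_j^p$ as the values) rather than the reverse; normalizing the other way would not produce the clean factorization, so identifying the correct convex-combination structure is the crux of the proof.
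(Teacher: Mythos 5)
Your proof is correct and takes essentially the same route as the paper's: the paper derives the identical per-permutation bound $\big(\sum_j \alpha_{\sigma(j)}x_j\big)^p \le S^{p-1}\sum_j \alpha_{\sigma(j)}x_j^p$ via H\"older's inequality (splitting $\alpha_{\sigma(j)}x_j = \alpha_{\sigma(j)}^{1/q}\cdot\alpha_{\sigma(j)}^{1/p}x_j$), which is your weighted Jensen step in disguise, and then concludes with the same $(n-1)!$ counting over permutations. Your explicit treatment of the degenerate case $S=0$ is a small additional care that the paper omits.
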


\begin{proof}
Write
\[
\alpha_{\sigma(1)}x_1 + \cdots + \alpha_{\sigma(n)}x_n = \alpha_{\sigma(1)}^{1/q} \cdot \alpha_{\sigma(1)}^{1/p} x_1 + \cdots + \alpha_{\sigma(n)}^{1/q} \cdot \alpha_{\sigma(n)}^{1/p}x_n.
\]
Then, by H\"{o}lder's inequality,
\begin{align*}
	\alpha_{\sigma(1)}x_1 + \cdots + \alpha_{\sigma(n)}x_n &\leq \left( \alpha_{\sigma(1)} + \cdots + \alpha_{\sigma(n)}\right)^{1/q} \left(\alpha_{\sigma(1)} x_1^p + \cdots + \alpha_{\sigma(n)}x_n^p \right)^{1/p}\\
	&=  (\alpha_1 + \dots + \alpha_n)^{1/q} \left(\alpha_{\sigma(1)} x_1^p + \cdots + \alpha_{\sigma(n)}x_n^p \right)^{1/p}.
\end{align*}
Therefore,
\begin{align*}
\sum_{\sigma \in S_n} \big( \alpha_{\sigma(1)}x_1 + \cdots + \alpha_{\sigma(n)}x_n \big)^p
&\leq  (\alpha_1 + \dots + \alpha_n)^{p/q}\sum_{\sigma \in S_n}\left(\alpha_{\sigma(1)} x_1^p + \cdots + \alpha_{\sigma(n)}x_n^p \right)\\
&=  (n-1)! (\alpha_1 + \dots + \alpha_n)^{p}\left(x_1^p + \cdots + x_n^p \right).
\end{align*}
\end{proof}

\begin{remark}
Of particular interest is the case where $\alpha_1, \dots, \alpha_n$ is a convex sequence, i.e., nonnegative and add up to 1. We obtain the following identity:
\[
\frac{1}{n!} \sum_{\sigma \in S_n} \big( \alpha_{\sigma(1)}x_1 + \cdots + \alpha_{\sigma(n)}x_n \big)^p
\leq
\frac{x_1^p+\cdots+x_n^p}{n}.
\]
\end{remark}

We can now find the precise value of $\|A(n,a,b)\|_p$.

\begin{theorem} \label{T:p-norm-A+}
Let $a,b \geq 0$, and let $A= A(n,a,b)$ be given by \eqref{E:defA-}. Assume $p > 1$.  Then
\[
\|A\|_p = (n-1)b+a.
\]
\end{theorem}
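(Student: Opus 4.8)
The plan is to establish the reverse inequality to \eqref{lower-bound-Anab}, thereby promoting the lower bound $(n-1)b+a$ to an equality. Since we already know $\|A\|_p \geq (n-1)b+a$ by testing on $x=(1,1,\dots,1)$, it suffices to prove that $\|Ax\|_p \leq ((n-1)b+a)\|x\|_p$ for every $x \in \mathbb{R}^n$. Because all entries of $A=A(n,a,b)$ are nonnegative, we have $|(Ax)_k| \leq (A|x|)_k$ entrywise, where $|x|=(|x_1|,\dots,|x_n|)$; hence $\|Ax\|_p \leq \|A|x|\|_p$ and it is enough to treat nonnegative vectors $x$.

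For a nonnegative vector $x$, each coordinate of $Ax$ is a sum of the form $\alpha_{\sigma(1)}x_1 + \cdots + \alpha_{\sigma(n)}x_n$, where $(\alpha_1,\dots,\alpha_n)$ is the first row of $A$ (namely one copy of $a$ and $n-1$ copies of $b$) and $\sigma$ ranges over the appropriate cyclic shifts. My intention is to bound $\|Ax\|_p^p = \sum_{k=1}^n |(Ax)_k|^p$ by the full symmetric-group sum $\sum_{\sigma \in S_n}(\alpha_{\sigma(1)}x_1+\cdots+\alpha_{\sigma(n)}x_n)^p$, and then invoke Lemma \ref{L:p-inegalite}. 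The clean way to do this is to pass through the averaged (permutation-symmetrized) estimate: applying Lemma \ref{L:p-inegalite} to the row vector $(\alpha_1,\dots,\alpha_n)$ yields
\[
\sum_{\sigma \in S_n} \big( \alpha_{\sigma(1)}x_1 + \cdots + \alpha_{\sigma(n)}x_n \big)^p \leq (n-1)!\,\big((n-1)b+a\big)^p\,\|x\|_p^p,
\]
since $\alpha_1+\cdots+\alpha_n = (n-1)b+a$.

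The remaining step is to relate the genuine quantity $\|Ax\|_p^p$, which involves only the $n$ cyclic permutations appearing as rows of $A$, to the symmetric-group sum over all $n!$ permutations. The key combinatorial observation is that each of the $n$ rows of $A(n,a,b)$ is obtained by placing the single entry $a$ in one of the $n$ positions and $b$ elsewhere, so the multiset of the $n$ rows consists of exactly the $n$ distinct rearrangements of $(\alpha_1,\dots,\alpha_n)$; the full symmetric group $S_n$ covers each such distinct rearrangement exactly $(n-1)!$ times (the number of permutations fixing the placement of $a$). Consequently $\sum_{\sigma \in S_n}(\cdots)^p = (n-1)!\,\sum_{k=1}^n |(Ax)_k|^p = (n-1)!\,\|Ax\|_p^p$. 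Combining this identity with the displayed bound cancels the factor $(n-1)!$ and gives $\|Ax\|_p^p \leq ((n-1)b+a)^p\,\|x\|_p^p$, i.e. $\|Ax\|_p \leq ((n-1)b+a)\|x\|_p$, as desired.

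The main obstacle is the last bookkeeping step: correctly identifying that the rows of $A(n,a,b)$ exhaust precisely the distinct rearrangements of its first row and that $S_n$ overcounts each of them by the uniform factor $(n-1)!$. This multiplicity count is what makes Lemma \ref{L:p-inegalite} directly applicable; without it one would only have an inequality between two differently weighted sums. One should also handle the boundary cases cleanly—reducing to nonnegative $x$ via $\|Ax\|_p \leq \|A|x|\|_p$, and noting that $p>1$ is exactly the hypothesis under which Hölder's inequality in Lemma \ref{L:p-inegalite} applies (the cases $p=1$ and $p=\infty$ follow separately, e.g. by the elementary column-sum and row-sum formulas for the induced $1$- and $\infty$-norms).
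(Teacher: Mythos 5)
Your proposal is correct and follows essentially the same route as the paper: the identical key observation that the $n$ rows of $A(n,a,b)$ are the distinct rearrangements of $(a,b,\dots,b)$, each counted $(n-1)!$ times by the symmetric-group sum, followed by an application of Lemma \ref{L:p-inegalite} and the lower bound \eqref{lower-bound-Anab}. Your explicit reduction to nonnegative vectors via $|(Ax)_k| \leq (A|x|)_k$ is a small point of added care that the paper leaves implicit.
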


\begin{proof}
For the special pattern $(\alpha_1,\alpha_2,\dots,\alpha_n) = (a,b,\dots,b)$,  we have 
\begin{eqnarray*}
&&\frac{1}{(n-1)!}\sum_{\sigma \in S_n} \big( \alpha_{\sigma(1)}x_1 + \alpha_{\sigma(2)}x_2 + \cdots + \alpha_{\sigma(n)}x_n \big)^p\\
&=& \big( ax_1 + bx_2+ \cdots + bx_n \big)^p\\
&+& \big( bx_1 + ax_2+ \cdots + bx_n \big)^p\\
&\vdots&\\
&+& \big( bx_1 + bx_2+ \cdots + ax_n \big)^p.
\end{eqnarray*}
Since
\[
\begin{pmatrix}
a & b& b & \cdots & b\\
b & a & b & \cdots & b\\
b & b & a & \cdots & b\\
\vdots & \vdots & \vdots & \ddots & \vdots \\
b & b & b & \cdots & a
\end{pmatrix}
\begin{pmatrix}
x_1\\
x_2\\
x_3\\
\vdots\\
x_n
\end{pmatrix}
=
\begin{pmatrix}
ax_1 + bx_2+ bx_3+ \cdots + bx_n\\
bx_1 + ax_2+ bx_3+ \cdots + bx_n\\
bx_1 + bx_2+ ax_3+ \cdots + bx_n\\
\vdots\\
bx_1 + bx_2+ bx_3+ \cdots + ax_n
\end{pmatrix},
\]
by Lemma \ref{L:p-inegalite}, we conclude that
\[
\frac{\|Ax\|_p}{\|x\|_p} \leq (n-1)b+a.
\]
Therefore, in the light of \eqref{lower-bound-Anab},
\[
\|A\|_p =  (n-1)b+a.
\]
\end{proof}

It is not difficult to generalize the above method and find the $p$-norm of a general circulant matrix $A = \Circ(\alpha_1, \alpha_2, \dots, \alpha_n)$ with nonnegative entries. Let $y=Ax$ and for the sake of brevity write $\Delta = \alpha_1+\alpha_2+\cdots+\alpha_n$. Then, as in the proof of Lemma \ref{L:p-inegalite}, we have
\begin{eqnarray*}
y_1 &\leq&  \Delta^{1/q} (\alpha_1 x_1^p + \alpha_2 x_2^p+ \alpha_3 x_3^p+ \cdots + \alpha_n x_n^p)^{1/p},\\
y_2 &\leq&  \Delta^{1/q} (\alpha_n x_1^p + \alpha_1 x_2^p+ \alpha_2 x_3^p+ \cdots + \alpha_{n-1}x_n^p)^{1/p},\\
&\vdots&\\
y_n &\leq& \Delta^{1/q} (\alpha_2 x_1^p + \alpha_3 x_2^p+ \alpha_4 x_3^p+ \cdots + \alpha_1 x_n^p)^{1/p}.
\end{eqnarray*}
Therefore,
\[
\sum_{k=1}^{n}y_k^p \leq \Delta^{p} \sum_{k=1}^{n}x_k^p.
\]
This estimation means $\|Ax\|_p \leq \Delta \, \|x\|_p$. Considering again the same maximizing vector $x=(1,1,\dots,1)$, we conclude that

\begin{equation}
\|A\|_p = \alpha_1+\alpha_2+\cdots+\alpha_n.
\end{equation}

\section{The $2$-norm of the circulant matrix $A(n,-a,b)$}

Recall the definition of $A(n,-a,b)$ from \eqref{E:defA-}. As is usually the case, calculations for $p=2$ are easier. Nevertheless, even in this case there is an unexpected outcome: the 2-norm is not given by a single formula!

\begin{theorem} \label{T:2-norm-A-}
Let $a,b \geq 0$, and let $A=A(n,-a,b)$ be given by \eqref{E:defA-}. Then
\[
\|A\|_2 =
\left\{
\begin{array}{lcl}
a+b & \mbox{if} & (n-2)b \leq 2a, \\
   &  & \\
(n-1)b-a   & \mbox{if} & (n-2)b \geq 2a.
\end{array}
\right.
\]
\end{theorem}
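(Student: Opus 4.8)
The plan is to exploit the special additive structure of $A = A(n,-a,b)$. The key observation is that this matrix can be written as
\[
A = b\,J - (a+b)\,I,
\]
where $J$ denotes the $n \times n$ all-ones matrix and $I$ the identity: the off-diagonal entries become $b$ and the diagonal entries become $b - (a+b) = -a$, exactly as required by \eqref{E:defA-}. Since $A$ is a real symmetric matrix, its induced $2$-norm coincides with its spectral radius, so it suffices to compute the eigenvalues of $A$ and take the largest absolute value.

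Next I would diagonalize $J$. The all-ones matrix has rank one, with a single nonzero eigenvalue $n$ (attained on the vector $(1,\dots,1)$) and the eigenvalue $0$ of multiplicity $n-1$ on the orthogonal complement. Translating by $-(a+b)I$ preserves the eigenvectors and shifts every eigenvalue by $-(a+b)$, so the spectrum of $A$ consists of
\[
\lambda_0 = (n-1)b - a \quad (\mbox{multiplicity } 1), \qquad \lambda_1 = -(a+b) \quad (\mbox{multiplicity } n-1).
\]
(Equivalently, one may invoke the fact that the circulant matrix $\Circ(-a,b,\dots,b)$ has eigenvalues obtained by evaluating its symbol at the $n$-th roots of unity; this yields the same two values, since $1 + \omega + \cdots + \omega^{n-1} = 0$ for every nontrivial $n$-th root of unity $\omega$.) Consequently, using $a,b \geq 0$ to drop the absolute value on $a+b$,
\[
\|A\|_2 = \max\{|\lambda_0|, |\lambda_1|\} = \max\{\,|(n-1)b - a|,\ a+b\,\}.
\]

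The remaining work is a short case analysis to decide which quantity dominates, and the one point requiring care is the sign of $(n-1)b - a$. If $(n-1)b \geq a$, then $|(n-1)b - a| = (n-1)b - a$, and a direct comparison gives $(n-1)b - a \geq a+b$ precisely when $(n-2)b \geq 2a$; this produces both branches of the stated formula on the region $(n-1)b \geq a$. If instead $(n-1)b < a$, then $|(n-1)b - a| = a - (n-1)b \leq a+b$ automatically, so the maximum is $a+b$; moreover $(n-1)b < a$ together with $b \geq 0$ forces $(n-2)b \leq (n-1)b < a \leq 2a$, placing this situation squarely in the branch $(n-2)b \leq 2a$. The two branches agree on the boundary $(n-2)b = 2a$, where both expressions equal $a+b$, so the formula is consistent. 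I do not anticipate a genuine obstacle: the symmetry reduces everything to an eigenvalue computation, and the only subtlety is verifying that the low-$b$ regime $(n-1)b < a$ is correctly absorbed into the case $(n-2)b \leq 2a$.
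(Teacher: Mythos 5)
Your proof is correct, and it takes a genuinely different route from the paper's own proof of this theorem. The paper expands $\|Ax\|_2^2$ directly as a quadratic form, obtaining the identity $\|Ax\|_2^2 = (a+b)^2\|x\|_2^2 + b\bigl((n-2)b-2a\bigr)(x_1+\cdots+x_n)^2$, and then argues case by case by exhibiting explicit maximizing vectors on the unit sphere (vectors with zero coordinate sum when $(n-2)b \leq 2a$, and $\pm\tfrac{1}{\sqrt{n}}(1,\dots,1)$ when $(n-2)b \geq 2a$). You instead write $A = bJ - (a+b)I$, use symmetry to reduce the norm to the spectral radius, and read off the two eigenvalues from the rank-one structure of $J$. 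Your route is shorter and cleaner, and your case analysis --- in particular the check that the regime $(n-1)b < a$ is absorbed into the branch $(n-2)b \leq 2a$ --- is handled correctly; this is exactly the one point where a careless argument would go wrong. It is worth noting that the paper itself arrives at essentially your idea two results later: its Theorem \ref{T:2-norm-Circ_n} computes $\|A\|_2$ for any matrix with $A^\intercal A = (\rho-\beta)I + \beta K$ via the same eigenvalue count, and its concluding remarks record the decomposition $A(n,-a,b) = -(a+b)I + bK$. So your argument anticipates the paper's more general spectral theorem, applied directly to $A$ rather than to $A^\intercal A$. What the paper's quadratic-form computation buys, and what your approach does not provide, is the identity \eqref{E:normp2m} itself, which is reused verbatim in the proof of Theorem \ref{T:2-norm-A-p} to derive the $p$-norm upper bounds; your eigenvalue argument yields the norm but not that identity.
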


\begin{proof}
For the sake of notational simplicity, in this proof we write $A$ for $A(n,-a,b)$. Let $y=Ax$, i.e.,
\[
\begin{pmatrix}
y_1\\
y_2\\
y_3\\
\vdots\\
y_n
\end{pmatrix}
=
\begin{pmatrix}
-a x_1 + b x_2 + b x_3+ \cdots + b x_n\\
b x_1 - a x_2 + b x_3+ \cdots + bx_n\\
b x_1 + b x_2 -a x_3+ \cdots + b x_n\\
\vdots\\
b x_1 + b x_2+ b x_3+ \cdots -a x_n
\end{pmatrix}.
\]
Then
\begin{eqnarray*}
\|Ax\|_2^2 &=&  y_1^2+y_2^2+\cdots+y_n^2\\
&=& (-a x_1 + b x_2 + b x_3+ \cdots + b x_n)^2\\
&+& (b x_1 - a x_2 + b x_3+ \cdots + bx_n)^2 \\
&+& (b x_1 + b x_2 -a x_3+ \cdots + b x_n)^2 \\
&\vdots&\\
&+& (b x_1 + b x_2+ b x_3+ \cdots -a x_n)^2 \\
&=& (a^2+(n-1)b^2) \|x\|_2^2 + (2(n-2)b^2-4ab) (x_1x_2+x_1x_3+\cdots+x_{n-1}x_{n}).
\end{eqnarray*}
Since
\[
(x_1+x_2+\cdots+x_n)^2= \|x\|_2^2 +2(x_1x_2+x_1x_3+\cdots+x_{n-1}x_{n}),
\]
we can rewrite the above identity as
\begin{equation}\label{E:normp2m}
\|Ax\|_2^2 =  (a+b)^2 \|x\|_2^2 + b((n-2)b-2a) (x_1+x_2+\cdots+x_{n})^2.
\end{equation}
With the normalizing assumption $x_1^2+x_2^2+\cdots+x_n^2=1$, we have
\[
\|Ax\|_2^2 =  (a+b)^2 + b((n-2)b-2a) (x_1+x_2+\cdots+x_{n})^2.
\]
Now, we face two cases.\\

\noindent {\em Case I:} $(n-2)b \leq 2a$. In this case
\[
\|Ax\|_2^2 \leq  (a+b)^2
\]
and the maximizing vectors on the unit sphere are precisely those with
\[
x_1+x_2+\cdots+x_{n}=0.
\]
Hence,
\[
\|A\|_2 = a+b.\\
\]

\noindent {\em Case II:} $(n-2)b \geq 2a$. In this case, the combination $(x_1+x_2+\cdots+x_{n})^2$ plays a role and its maximum on the unit sphere is needed. By direct verification (or by Cauchy--Schwarz), the maximum happens whenever
\[
x_1=x_2=\cdots=x_n= \frac{1}{\sqrt{n}}.
\]
Thus,
\[
\|Ax\|_2^2 \leq  (a+b)^2 + n b((n-2)b-2a) = (a-(n-1)b)^2,
\]
and the maximizing vectors on the unit sphere are $\pm \frac{1}{\sqrt{n}} (1,1,\dots,1)$. Hence,
\[
\|A\|_2 = (n-1)b-a.
\]
\end{proof}

We shall now present a generalization of the case $A(3,-a,b)$.

\begin{proposition} \label{T:2-norm-Circ_3}
If $A=\Circ(\alpha_1, \alpha_2, \alpha_3)$ for arbitrary $\alpha_1, \alpha_2, \alpha_3 \in \mathbb{R}$, then
\[
\|A\|_2 =
\left\{
\begin{array}{lcl}
\sqrt{\alpha_1^2 + \alpha_2^2 + \alpha_3^2 - (\alpha_1\alpha_2+\alpha_2\alpha_3+\alpha_3\alpha_1)} & \mbox{if} & \alpha_1\alpha_2+\alpha_2\alpha_3+\alpha_3\alpha_1 \leq 0, \\
   &  & \\
|\alpha_1+\alpha_2+\alpha_3|   & \mbox{if} & \alpha_1\alpha_2+\alpha_2\alpha_3+\alpha_3\alpha_1 \geq 0.
\end{array}
\right.
\]
\end{proposition}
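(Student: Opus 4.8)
The plan is to exploit the fact that every circulant matrix is normal and is diagonalized by the discrete Fourier transform, so that its induced $2$-norm equals its spectral radius, i.e.\ the largest modulus among its eigenvalues. Concretely, writing $\omega = e^{2\pi i/3}$ for a primitive cube root of unity, the eigenvalues of $A = \Circ(\alpha_1,\alpha_2,\alpha_3)$ are $\lambda_j = \alpha_1 + \alpha_2 \omega^j + \alpha_3 \omega^{2j}$ for $j = 0,1,2$. Since the $\alpha_k$ are real, we have $\lambda_2 = \overline{\lambda_1}$, and in particular $|\lambda_1| = |\lambda_2|$. Thus $\|A\|_2 = \max\{|\lambda_0|, |\lambda_1|\}$, and the whole problem reduces to comparing these two quantities.

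Next I would record the two candidate values explicitly. The first is immediate: $|\lambda_0| = |\alpha_1+\alpha_2+\alpha_3|$. For the second, substituting $\omega = -\tfrac12 + \tfrac{\sqrt3}{2} i$ and $\omega^2 = -\tfrac12 - \tfrac{\sqrt3}{2} i$ gives $\lambda_1 = \big(\alpha_1 - \tfrac12\alpha_2 - \tfrac12\alpha_3\big) + \tfrac{\sqrt3}{2}(\alpha_2 - \alpha_3)\, i$, so a routine expansion yields $|\lambda_1|^2 = \alpha_1^2 + \alpha_2^2 + \alpha_3^2 - (\alpha_1\alpha_2 + \alpha_2\alpha_3 + \alpha_3\alpha_1)$, which is exactly the quantity under the square root in the first branch of the statement.

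Finally, to determine which eigenvalue dominates, I would form the difference $|\lambda_0|^2 - |\lambda_1|^2$. Expanding $(\alpha_1+\alpha_2+\alpha_3)^2$ and subtracting the expression for $|\lambda_1|^2$, all the squared terms cancel and one is left with $|\lambda_0|^2 - |\lambda_1|^2 = 3(\alpha_1\alpha_2 + \alpha_2\alpha_3 + \alpha_3\alpha_1)$. Hence $|\lambda_0| \geq |\lambda_1|$ precisely when the symmetric sum $\alpha_1\alpha_2+\alpha_2\alpha_3+\alpha_3\alpha_1$ is nonnegative, and $|\lambda_1| \geq |\lambda_0|$ when it is nonpositive; feeding this dichotomy into $\|A\|_2 = \max\{|\lambda_0|,|\lambda_1|\}$ produces the two cases claimed. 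On the overlap, where the symmetric sum vanishes, both formulas agree, so the statement is consistent at the boundary.

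The only genuine conceptual input is the first step --- that the operator $2$-norm of a circulant matrix equals its spectral radius. This rests on the normality of circulant matrices (they are simultaneously unitarily diagonalized by the Fourier matrix, so that $AA^* = A^*A$), which I would either cite from Davis \cite{1979-Davis} or verify directly; everything after that is elementary algebra with cube roots of unity. The main point to be careful about is that $A$ is generally \emph{not} symmetric, so one must invoke normality rather than naively identifying the norm with a largest eigenvalue.
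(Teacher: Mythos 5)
Your proof is correct, but it takes a genuinely different route from the paper. The paper does not prove this proposition directly: it subsumes it into Theorem \ref{T:2-norm-Circ_n}, observing that the three columns of $\Circ(\alpha_1,\alpha_2,\alpha_3)$ have equal squared norms $\rho = \alpha_1^2+\alpha_2^2+\alpha_3^2$ and equal pairwise inner products $\beta = \alpha_1\alpha_2+\alpha_2\alpha_3+\alpha_3\alpha_1$, so that $A^\intercal A = (\rho-\beta)I + \beta K$ with $K$ the all-ones matrix; the eigenvalues of this Gram matrix are $\rho-\beta$ (multiplicity $2$) and $\rho+2\beta = (\alpha_1+\alpha_2+\alpha_3)^2$, and taking the square root of the larger one gives the two cases according to the sign of $\beta$. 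You instead diagonalize $A$ itself by the discrete Fourier transform, use normality to identify $\|A\|_2$ with the spectral radius, and compare $|\lambda_0|^2$ with $|\lambda_1|^2 = \rho - \beta$; your computations ($|\lambda_1|^2 = \alpha_1^2+\alpha_2^2+\alpha_3^2 - \beta$ and $|\lambda_0|^2 - |\lambda_1|^2 = 3\beta$) check out, and your caution about invoking normality rather than symmetry is exactly right, since $A$ is symmetric only when $\alpha_2=\alpha_3$. The trade-off: your argument extends verbatim to circulants of any order $n$, where $\|A\|_2 = \max_j \bigl| \sum_k \alpha_k e^{2\pi i jk/n} \bigr|$, though the case analysis then becomes less clean; the paper's argument stays entirely within real linear algebra (no roots of unity) and generalizes in a different direction, to arbitrary, not necessarily circulant, matrices whose columns are equinormal with constant pairwise inner products --- which is precisely what lets the paper treat $A(n,-a,b)$ and this proposition by one and the same mechanism, and explains conceptually why two cases arise (the sign of $\beta$).
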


Note that the special case $b= \alpha_2 = \alpha_3 \geq 0$ and $-a= \alpha_1 < 0$ yields the case $A(3,-a,b)$.

The proof of Proposition \ref{T:2-norm-Circ_3} is omitted since we shall now state and prove a more general theorem that subsumes both previous results and that conceptually explains why there are two cases that arise in them.

\begin{theorem} \label{T:2-norm-Circ_n}
Suppose $A = [c_1 | c_2 | \dots | c_n] \in \mathbb{R}^{n \times n}$ is an arbitrary matrix whose columns $c_1, \dots, c_n \in \mathbb{R}^n$ satisfy:
\[
c_1^\intercal  c_1 = \dots = c_n^\intercal  c_n = \rho, \quad\quad c_i^\intercal c_j = \beta,\quad (\forall 1 \leq i < j \leq n),
\]
for some scalars $\rho, \beta \in \mathbb{R}$.
\[
\|A\|_2 =
\left\{
\begin{array}{lcl}
\sqrt{\rho - \beta} & \mbox{if} & \beta \leq 0, \\
   &  & \\
\sqrt{\rho + (n-1)\beta}  & \mbox{if} & \beta \geq 0.
\end{array}
\right.
\]
\end{theorem}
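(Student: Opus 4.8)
The plan is to reduce everything to the spectral theorem by exploiting the Gram-matrix structure of the hypotheses. The hypotheses $c_i^\intercal c_i = \rho$ and $c_i^\intercal c_j = \beta$ for $i \neq j$ are precisely statements about the entries of $A^\intercal A$, so the natural first move is to recall that for the operator norm induced by the Euclidean norm one has $\|A\|_2 = \sigma_{\max}(A) = \sqrt{\lambda_{\max}(A^\intercal A)}$, since $A^\intercal A$ is symmetric positive semidefinite. The entire problem thus becomes an eigenvalue computation for $A^\intercal A$.

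First I would write $A^\intercal A$ explicitly. Its $(i,j)$ entry is exactly $c_i^\intercal c_j$, which equals $\rho$ on the diagonal and $\beta$ off the diagonal. Hence
\[
A^\intercal A = (\rho - \beta)\, I_n + \beta\, J_n,
\]
where $I_n$ is the identity and $J_n$ is the $n \times n$ all-ones matrix. This is the key structural observation: $A^\intercal A$ is a scalar matrix plus a rank-one term, so its spectrum is immediate.

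Next I would read off the eigenvalues. The all-ones matrix $J_n$ has eigenvalue $n$ on the coherent direction $\mathbf{1} = (1,\dots,1)^\intercal$ and eigenvalue $0$ on the orthogonal complement $\mathbf{1}^\perp$ (with multiplicity $n-1$). Since $A^\intercal A$ and $J_n$ are simultaneously diagonalized by the same eigenbasis, the eigenvalues of $A^\intercal A$ are
\[
\rho + (n-1)\beta \quad (\text{simple, on } \mathbf{1}), \qquad \rho - \beta \quad (\text{multiplicity } n-1, \text{ on } \mathbf{1}^\perp).
\]
Finally I would select the larger eigenvalue. Their difference is $(\rho + (n-1)\beta) - (\rho - \beta) = n\beta$, so $\rho + (n-1)\beta$ dominates exactly when $\beta \geq 0$, and $\rho - \beta$ dominates when $\beta \leq 0$. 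Taking the square root of $\lambda_{\max}(A^\intercal A)$ then gives the stated two-case formula.

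There is no serious obstacle here; the whole argument is driven by recognizing the rank-one-plus-scalar form of $A^\intercal A$. The only point worth flagging is the conceptual payoff promised in the text: the dichotomy at $\beta = 0$ is explained by \emph{which eigenvector carries the norm}, namely the coherent all-ones direction $\mathbf{1}$ when $\beta \geq 0$ versus its orthogonal complement when $\beta \leq 0$. Note also that positive semidefiniteness of $A^\intercal A$ forces both $\rho - \beta \geq 0$ and $\rho + (n-1)\beta \geq 0$, so the square roots are always well defined and no separate nonnegativity check is needed. Specializing to $\rho = a^2 + (n-1)b^2$ and $\beta = b^2 - 2ab + (n-2)b^2 \cdot \tfrac{1}{?}$ (more directly, reading the Gram entries of $A(n,-a,b)$) recovers Theorem \ref{T:2-norm-A-}, and the case $n=3$ recovers Proposition \ref{T:2-norm-Circ_3}.
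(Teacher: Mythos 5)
Your proof is correct and follows exactly the same route as the paper: identify $A^\intercal A$ as the Gram matrix $(\rho-\beta)I + \beta K$ (with $K$ the all-ones matrix), read off its eigenvalues $\rho-\beta$ (multiplicity $n-1$) and $\rho+(n-1)\beta$ (multiplicity $1$), and take the square root of the larger one according to the sign of $\beta$. The only blemish is the garbled expression for $\beta$ in your closing remark about recovering Theorem \ref{T:2-norm-A-} (the paper's value is $\beta = (n-2)b^2 - 2ab$), but this aside is not part of the proof itself.
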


\begin{proof}
It is well known that the square of the induced 2-norm (also called the spectral norm) of any real $n \times n$ matrix $A$ is precisely the largest eigenvalue of the matrix $A^\intercal A$. But, under the above-stated hypotheses,
\[
A^\intercal A =
\begin{pmatrix}
\rho & \beta & \beta & \cdots & \beta\\
\beta & \rho & \beta & \cdots & \beta\\
\beta & \beta & \rho & \cdots & \beta\\
\vdots & \vdots & \vdots & \ddots & \vdots \\
\beta & \beta & \beta & \cdots & \rho
\end{pmatrix}
= (\rho - \beta)I + \beta K,
\]
where $I$ is the $n \times n$ identity matrix and $K$ is the $n\times n$ all-ones matrix (i.e., the matrix where every element is equal to 1). The eigenvalues of the positive semidefinite matrix $A^\intercal A =  (\rho - \beta)I + \beta K$ are $\rho - \beta$ (with multiplicity of $n-1$) and $\rho + (n-1)\beta$ (with multiplicity 1).Taking their maximum and then its square root, the result follows.
\end{proof}

Remark that Theorem \ref{T:2-norm-A-} corresponds to the case where $\rho = a^2 + (n-1)b^2$ and $\beta = -2ab + (n-2)b^2$, whereas Proposition \ref{T:2-norm-Circ_3} corresponds to the case where $n = 3$, $\rho = \alpha_1^2 + \alpha_2^2 + \alpha_3^2$ and $\beta = \alpha_1\alpha_2 + \alpha_2\alpha_3 + \alpha_3\alpha_1$.

\section{Estimation of the $p$-norm of the circulant matrix $A(n,-a,b)$}

When $p \ne 2$, we have not been able to obtain a precise formula for the $p$-norm of $A(n,-a,b)$. However, using two different techniques, we provide certain lower and upper bounds.

\begin{theorem} \label{T:2-norm-A-p}
Let $a,b \geq 0$, and let $A=A(n,-a,b)$ be given by \eqref{E:defA-}.
Assume $p$ verifies ${2 \leq p < \infty}$. Then
\[
\left\{
\begin{array}{ccl}
a+b \leq \|A\|_p \leq n^{\frac{1}{2}-\frac{1}{p}}(a+b) & \mbox{if} & (n-2)b \leq 2a, \\
   &  & \\
(n-1)b-a \leq \|A\|_p \leq n^{\frac{1}{2}-\frac{1}{p}} ((n-1)b-a)   & \mbox{if} & (n-2)b \geq 2a.
\end{array}
\right.
\]
\end{theorem}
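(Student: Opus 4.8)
The plan is to exploit the spectral structure of $A$ together with the already-established exact value of $\|A\|_2$ from Theorem \ref{T:2-norm-A-}. Writing $I$ for the identity and $K$ for the $n \times n$ all-ones matrix, note that $A = A(n,-a,b) = -(a+b)I + bK$. Since $K$ has eigenvalue $n$ on the all-ones vector $\mathbf{1} = (1,\dots,1)$ and eigenvalue $0$ on the hyperplane $\mathbf{1}^\perp$, the matrix $A$ has eigenvalue $(n-1)b - a$ on $\mathbf{1}$ and eigenvalue $-(a+b)$ on $\mathbf{1}^\perp$. The lower bounds will come for free from these eigenvectors, and the upper bounds from comparing the $p$-norm with the $2$-norm.

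For the lower bounds, observe that if $v$ is an eigenvector of $A$ with eigenvalue $\lambda$, then $Av = \lambda v$ forces $\|Av\|_p = |\lambda|\,\|v\|_p$ in \emph{every} $p$-norm, so that $\|A\|_p \geq |\lambda|$ for each $p$. In Case I, where $(n-2)b \leq 2a$, any nonzero $v \in \mathbf{1}^\perp$ is an eigenvector with $|\lambda| = a+b$, whence $\|A\|_p \geq a+b$. In Case II, where $(n-2)b \geq 2a$, the vector $\mathbf{1}$ itself is an eigenvector with eigenvalue $(n-1)b - a \geq a+b \geq 0$, whence $\|A\|_p \geq (n-1)b - a$. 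In both cases the lower bound is exactly $\|A\|_2$ as computed in Theorem \ref{T:2-norm-A-}.

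For the upper bounds I would invoke the two standard comparisons between the vector $2$-norm and $p$-norm valid for $p \geq 2$, namely
\[
\|y\|_p \leq \|y\|_2 \qquad \text{and} \qquad \|x\|_2 \leq n^{\frac{1}{2}-\frac{1}{p}}\,\|x\|_p,
\]
the second of which follows from H\"older's inequality applied to $\sum_k x_k^2 = \sum_k x_k^2 \cdot 1$ with conjugate exponents $p/2$ and $p/(p-2)$. Chaining these with the definition of the operator $2$-norm gives, for every $x$,
\[
\|Ax\|_p \leq \|Ax\|_2 \leq \|A\|_2\,\|x\|_2 \leq n^{\frac{1}{2}-\frac{1}{p}}\,\|A\|_2\,\|x\|_p,
\]
so that $\|A\|_p \leq n^{\frac{1}{2}-\frac{1}{p}}\,\|A\|_2$. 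Substituting the two values of $\|A\|_2$ from Theorem \ref{T:2-norm-A-} yields the claimed upper bound in each of the two cases.

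There is no serious obstacle here: the argument is essentially an interpolation between the exact $2$-norm and the crude norm equivalences. The only point requiring care is pinning down the sharp constant $n^{1/2-1/p}$ in the comparison $\|x\|_2 \leq n^{1/2-1/p}\|x\|_p$; this constant is what makes both bounds collapse to equality at $p=2$ (where $n^{1/2-1/p}=1$ and both sides equal $\|A\|_2$), and it is the source of the slack for $p>2$, which the sharper Theorem \ref{T:3-norm-A-p} is designed to reduce.
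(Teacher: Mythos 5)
Your proof is correct, and while it rests on the same two ingredients as the paper's proof --- test vectors for the lower bounds and an $\ell^p$--$\ell^2$ comparison for the upper bounds --- it is organized in a genuinely cleaner, more modular way. For the lower bounds, the paper plugs in the very vectors you use (vectors with zero coordinate sum, and $\pm n^{-1/p}(1,\dots,1)$) and computes $\|Ax\|_p$ directly from the expansion of $Ax$; your observation that these are precisely the eigenvectors of $A = -(a+b)I + bK$, so that $\|A\|_p \geq |\lambda|$ holds for every $p$ at once, is the same computation in more conceptual clothing (the paper itself only mentions the decomposition $-(a+b)I+bK$ in its concluding remarks). The real difference is in the upper bound. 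The paper does not quote Theorem \ref{T:2-norm-A-} as a black box: it re-expands $\|Ax\|_2^2$ via the identity \eqref{E:normp2m}, applies H\"older's inequality to the $\|x\|_2^2$ term, and then splits into cases, where in Case II it must additionally maximize $(x_1+\cdots+x_n)^2$ over the unit sphere of $\|\cdot\\|_p$, a step justified only by ``direct verification.'' Your chain $\|Ax\|_p \leq \|Ax\|_2 \leq \|A\|_2\,\|x\|_2 \leq n^{\frac{1}{2}-\frac{1}{p}}\|A\|_2\,\|x\|_p$ instead proves the general inequality $\|A\|_p \leq n^{\frac{1}{2}-\frac{1}{p}}\|A\|_2$, valid for every matrix and every $p \geq 2$, and then simply substitutes the exact value of $\|A\|_2$; the case distinction reduces to quoting Theorem \ref{T:2-norm-A-} rather than redoing its quadratic-form analysis, and the delicate $p$-sphere maximization disappears entirely. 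The trade-off is minor: the paper's inline derivation is self-contained given that \eqref{E:normp2m} was already established in the proof of Theorem \ref{T:2-norm-A-}, whereas your version is shorter, makes transparent why the constant $n^{\frac{1}{2}-\frac{1}{p}}$ appears, and shows immediately why both bounds collapse to equality at $p=2$.
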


\begin{proof}
Assume that $|x_1|^p+|x_2|^p+\cdots+|x_n|^p=1$. Then
\begin{eqnarray*}
\|Ax\|_p^p &=&  |y_1|^p+|y_2|^p+\cdots+|y_n|^p\\
&=& |-a x_1 + b x_2 + b x_3+ \cdots + b x_n|^p\\
&+& |b x_1 - a x_2 + b x_3+ \cdots + bx_n|^p \\
&+& |b x_1 + b x_2 -a x_3+ \cdots + b x_n|^p \\
&\vdots&\\
&+& |b x_1 + b x_2+ b x_3+ \cdots -a x_n|^p \\
&=& |nbS-(a+b)x_1|^p+|nbS-(a+b)x_2|^p+\cdots+|nbS-(a+b)x_n|^p,
\end{eqnarray*}
where $S=(x_1+x_2+\cdots+x_n)/n$. The points for which $|x_1|^p+|x_2|^p+\cdots+|x_n|^p=1$ and $S=0$ imply
\begin{equation}\label{E:estimate-1}
\|A\|_p \geq a+b.
\end{equation}
Moreover, the choices $\pm \frac{1}{\sqrt[p]{n}}\, (1,1,1,\dots,1)$ give
\begin{equation}\label{E:estimate-2}
\|A\|_p \geq |(n-1)b-a|.
\end{equation}
Note that the above results are valid for any $p>1$.

Now, by \eqref{E:normp2m} and under the assumption that $p \geq 2$, we have
\[
\|Ax\|_p \leq \|Ax\|_2 = \left( (a+b)^2 \|x\|_2^2 + b((n-2)b-2a) (x_1+x_2+\cdots+x_{n})^2 \right)^{1/2}.
\]
Recall that H\"older's inequality implies that
\[
\|x\|_p ~\leq~ \|x\|_2 \leq n^{\frac{1}{2}-\frac{1}{p}}\|x\|_p,
\]
which we may rewrite as $\|x\|_2^2 \leq n^{1-\frac{2}{p}}\|x\|_p^{2}$. Hence
\[
\|Ax\|_p \leq \left( (a+b)^2 n^{1-\frac{2}{p}}\|x\|_p^{2} + b((n-2)b-2a) (x_1+x_2+\cdots+x_{n})^2 \right)^{1/2}.
\]

\noindent {\em Case I:} $(n-2)b \leq 2a$. In this case
\[
\|Ax\|_p^2 \leq  (a+b)^2 n^{1-\frac{2}{p}}\|x\|_p^{2},
\]
which implies
\[
\|A\|_p \leq n^{\frac{1}{2}-\frac{1}{p}}(a+b).
\]
Therefore, by \eqref{E:estimate-1}, we have
\[
(a+b) \leq \|A\|_p \leq n^{\frac{1}{2}-\frac{1}{p}}(a+b).
\]

\noindent {\em Case II:} $(n-2)b \geq 2a$. In this case, the maximum of $(x_1+x_2+\cdots+x_{n})^2$ on the unit sphere of $(\mathbb{R}^n, \|\cdot\|_p)$ is needed. By direct verification, this happens whenever
\[
x_1=x_2=\cdots=x_n= \frac{1}{\sqrt[p]{n}}.
\]
Thus,
\[
\|Ax\|_p^2 \leq  (a+b)^2 n^{1-\frac{2}{p}} + b((n-2)b-2a) n^{1-\frac{2}{p}},
\]
which implies
\[
\|A\|_p \leq n^{\frac{1}{2}-\frac{1}{p}} ((n-1)b-a).
\]
Therefore, by \eqref{E:estimate-2}, we have
\[
(n-1)b-a \leq \|A\|_p \leq n^{\frac{1}{2}-\frac{1}{p}} ((n-1)b-a).
\]
\end{proof}

In the following, we use the Riesz--Thorin interpolation theorem \cite[Theorem 1.1.1.]{1976-Lofstrom} to obtain another set of upper bounds for $\|A\|_p$. Even though the combinations below are more complex than those given in Theorem \ref{T:2-norm-A-p}, they provide more accurate estimations, which are optimal for $p = \infty$.

\begin{theorem} \label{T:3-norm-A-p}
Let $a,b \geq 0$, and let $A=A(n,-a,b)$ be given by \eqref{E:defA-}.
Assume $p$ verifies $2 \leq p \leq \infty$. Then

\[
\left\{
\begin{array}{ccl}
a+b \leq \|A\|_p \leq \big(a+b\big)^{\frac{2}{p}} \big((n-1)b+a \big)^{1-\frac{2}{p}} & \mbox{if} & (n-2)b \leq 2a, \\
   &  & \\
(n-1)b-a \leq \|A\|_p \leq \big((n-1)b-a \big)^{\frac{2}{p}} \big((n-1)b+a\big)^{1- \frac{2}{p}}   & \mbox{if} & (n-2)b \geq 2a.
\end{array}
\right.
\]
Both upper bounds are optimal for $p = \infty$.
\end{theorem}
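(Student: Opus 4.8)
The plan is to obtain the upper bounds by interpolating between the endpoints $p=2$ and $p=\infty$ via the Riesz--Thorin theorem, while reading off the lower bounds directly from the previous theorem. Indeed, the lower bounds $a+b \leq \|A\|_p$ (Case I) and $(n-1)b-a \leq \|A\|_p$ (Case II) are exactly the estimates \eqref{E:estimate-1} and \eqref{E:estimate-2}, which were already established for every $p>1$; hence no new work is required for those, and the whole content of the theorem lies in the two upper bounds and their optimality at infinity.

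For the upper bounds I would first record the two endpoint norms. The $2$-norm is furnished by Theorem \ref{T:2-norm-A-}: it equals $a+b$ when $(n-2)b \leq 2a$ and $(n-1)b-a$ when $(n-2)b \geq 2a$. The induced $\infty$-norm of any matrix is its maximum absolute row sum; since each row of $A(n,-a,b)$ consists of a single entry $-a$ together with $n-1$ entries $b$ (and $a,b \geq 0$), this gives $\|A\|_\infty = (n-1)b + a$ in \emph{both} cases. Next I would apply Riesz--Thorin with $p_0 = 2$ and $p_1 = \infty$: solving $1/p = (1-\theta)/2 + \theta/\infty$ forces $\theta = 1 - 2/p$, and the theorem yields
\[
\|A\|_p \leq \|A\|_2^{\,1-\theta}\,\|A\|_\infty^{\,\theta} = \|A\|_2^{\,2/p}\,\|A\|_\infty^{\,1 - 2/p}.
\]
Substituting the two endpoint values case by case produces precisely the claimed bounds $(a+b)^{2/p}\big((n-1)b+a\big)^{1-2/p}$ and $\big((n-1)b-a\big)^{2/p}\big((n-1)b+a\big)^{1-2/p}$. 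Optimality at $p=\infty$ is then immediate: taking $p=\infty$ makes the exponent $2/p$ vanish, so both upper bounds collapse to $(n-1)b+a = \|A\|_\infty$, i.e.\ they agree with the true value.

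The one step requiring genuine care is the application of Riesz--Thorin itself, since its cleanest form, with interpolation constant $1$, is classically stated for operators on \emph{complex} $L^p$ spaces, whereas $A$ acts on $\mathbb{R}^n$. I would resolve this by regarding the real matrix $A$ as acting on $\mathbb{C}^n$: the real induced $p$-norm is dominated by the complex one (the real unit ball embeds into the complex unit ball), while both endpoint norms are insensitive to the scalar field, as $\|A\|_2$ is the largest singular value and $\|A\|_\infty$ the maximum absolute row sum, each identical over $\mathbb{R}$ and $\mathbb{C}$. Consequently the complex interpolation inequality transfers verbatim to the real setting. This scalar-field bookkeeping is the only subtlety; once it is in place, the case analysis and the endpoint substitutions are routine, and the comparison with Theorem \ref{T:2-norm-A-p} (where the factor $n^{1/2-1/p}$ blows up as $p\to\infty$) makes transparent why these bounds are the sharper ones near $p=\infty$.
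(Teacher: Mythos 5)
Your proof is correct and follows essentially the same route as the paper: lower bounds from \eqref{E:estimate-1} and \eqref{E:estimate-2}, the observation that $\|A\|_\infty = (n-1)b+a$, and Riesz--Thorin interpolation between $p_0=2$ and $p_1=\infty$ with $\theta = 1-2/p$, followed by the case split via Theorem \ref{T:2-norm-A-}. Your additional remark on transferring the complex interpolation inequality to the real setting is a point of care the paper leaves implicit, but it does not change the argument.
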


\begin{proof}
First observe that
\[
\|A\|_\infty ~=~ \max\limits_{1 \leq i \leq n} \sum\limits_{j=1}^n |A_{i,j}| = (n-1)b+a.
\]
In particular, this means that both upper bounds are optimal for $p = \infty$.

That $\|A\|_p \geq a+b$ and $\|A\|_p \geq |(n-1)b-a|$ was proved in \eqref{E:estimate-1} and \eqref{E:estimate-2} for ${2 \leq p < \infty}$.  Now, by applying the Riesz--Thorin interpolation theorem with ${p_0 = q_0 = 2}$,  ${p_1 = q_1 = \infty}$ and $q_\theta = p_\theta$, we obtain 
\begin{equation}\label{E:Interpol}
\|A\|_{p_\theta} ~\leq~ \|A\|_2^{1-\theta} \|A\|_\infty^\theta
\end{equation}
for (with a mild abuse of notation)
\[
\frac{1}{p_\theta} = \frac{1-\theta}{2} + \frac{\theta}{\infty} ~=~  \frac{1-\theta}{2}, \quad\quad (0< \theta< 1).
\]
Remark that $\theta = 1 - \frac{2}{p_\theta}$. Hence (\ref{E:Interpol}) can be restated as
\begin{equation*}\begin{aligned}
 \|A\|_{p_\theta} ~&\leq~ \|A\|_2^{\frac{2}{p_\theta}} \|A\|_\infty^{1- \frac{2}{p_\theta}}, \quad\quad (2 < p_\theta < \infty).
\end{aligned}\end{equation*}
It follows from Theorem \ref{T:2-norm-A-} that one needs to consider two cases:

\noindent {\em Case I:} $(n-2)b \leq 2a$. In this case,
\[
\|A\|_{p_\theta} ~\leq~ \big(a+b\big)^{\frac{2}{p}} \big((n-1)b+a \big)^{1-\frac{2}{p}}.
\]

\noindent {\em Case II:} $(n-2)b \geq 2a$. In this case,
\[
\|A\|_{p_\theta}  ~\leq~ \big((n-1)b-a \big)^{\frac{2}{p}} \big((n-1)b+a\big)^{1- \frac{2}{p}}.
\]
\end{proof}

\begin{corollary} \label{C:3-norm-A-p}
Let $a,b \geq 0$, and let $A=A(n,-a,b)$ be given by \eqref{E:defA-}.
Assume $p$ verifies ${1 \leq p \leq 2}$. Then, 
\[
\left\{
\begin{array}{ccl}
a+b \leq \|A\|_p \leq \big((n-1)b+a \big)^{\frac{2}{p}-1} \big(a+b\big)^{2- \frac{2}{p}} & \mbox{if} & (n-2)b \leq 2a, \\
   &  & \\
(n-1)b-a \leq \|A\|_p \leq \big((n-1)b+a \big)^{\frac{2}{p}-1} \big((n-1)b-a\big)^{2- \frac{2}{p}}  & \mbox{if} & (n-2)b \geq 2a.
\end{array}
\right.
\]
Both upper bounds are optimal for $p = 1$.
\end{corollary}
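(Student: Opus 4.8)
The plan is to deduce this corollary from Theorem \ref{T:3-norm-A-p} by exploiting the self-adjointness of $A = A(n,-a,b)$. Since $A$ is a real symmetric matrix, it is self-adjoint, and hence (as recalled in Section 2, via \cite[Theorem 5.6.36]{1985-Horn-Johnson}) we have $\|A\|_p = \|A\|_q$ whenever $1/p + 1/q = 1$. For $1 \leq p \leq 2$ the conjugate exponent $q$ ranges over $2 \leq q \leq \infty$, which is precisely the regime already covered by Theorem \ref{T:3-norm-A-p}. Thus the entire strategy reduces to applying that theorem to $\|A\|_q$ and then re-expressing the resulting bounds in terms of $p$.

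First I would record the translation of exponents. From $1/q = 1 - 1/p$ one gets $2/q = 2 - 2/p$ and $1 - 2/q = 2/p - 1$. Substituting these into the two upper bounds of Theorem \ref{T:3-norm-A-p}, namely $(a+b)^{2/q}\big((n-1)b+a\big)^{1-2/q}$ in Case I and $\big((n-1)b-a\big)^{2/q}\big((n-1)b+a\big)^{1-2/q}$ in Case II, yields exactly the upper bounds asserted in the corollary, up to reordering the two factors. This is the crux of the argument, and once the duality is in hand it is essentially a bookkeeping step.

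For the lower bounds I would reuse the two test vectors already employed in the proof of Theorem \ref{T:2-norm-A-p}. Taking $x = (1,-1,0,\dots,0)$ gives $Ax = (-(a+b),\,a+b,\,0,\dots,0)$, so that $\|Ax\|_p/\|x\|_p = a+b$ for every $p \geq 1$; and taking $x = (1,1,\dots,1)$ gives $Ax = \big((n-1)b-a\big)(1,\dots,1)$, whence $\|Ax\|_p/\|x\|_p = |(n-1)b-a|$. Both inequalities therefore hold on the entire range $1 \leq p \leq 2$, in particular at the endpoint $p = 1$, where the estimates \eqref{E:estimate-1} and \eqref{E:estimate-2} were only stated for $p > 1$.

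Finally, for optimality at $p = 1$ I would compute $\|A\|_1$ directly as the maximum absolute column sum; since each column of $A$ contains one entry $-a$ together with $n-1$ entries equal to $b$, this equals $(n-1)b + a$ (consistently with the duality $\|A\|_1 = \|A\|_\infty$ combined with the value of $\|A\|_\infty$ found in Theorem \ref{T:3-norm-A-p}). Setting $p = 1$ in either upper bound collapses the exponents so that each bound reduces to $\big((n-1)b+a\big)^1$ times a factor raised to the power $0$, giving exactly $(n-1)b + a = \|A\|_1$ and confirming sharpness. I do not anticipate a genuine obstacle here: the only point requiring a moment's care is the exponent arithmetic in the substitution $q \mapsto p$, since a sign slip would misplace the powers $2/p - 1$ and $2 - 2/p$.
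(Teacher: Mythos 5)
Your proposal is correct and follows essentially the same route as the paper, which deduces the corollary in one line from Theorem \ref{T:3-norm-A-p} together with the duality $\|A\|_p = \|A\|_q$ for $1/p + 1/q = 1$. Your additional details --- the exponent substitutions $2/q = 2 - 2/p$ and $1 - 2/q = 2/p - 1$, the explicit test vectors $(1,-1,0,\dots,0)$ and $(1,\dots,1)$ giving the lower bounds down to $p=1$, and the column-sum computation $\|A\|_1 = (n-1)b+a$ confirming sharpness --- are precisely the bookkeeping the paper leaves implicit, and they check out.
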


\begin{proof}
This is an easy consequence of Theorem \ref{T:3-norm-A-p} and the fact that $\|A\|_p = \|A\|_q$ where $1/p + 1/q = 1$.
\end{proof}

\section{Concluding remarks}

\begin{enumerate}
\item The estimations provided in Theorem \ref{T:3-norm-A-p} are more accurate than those presented in Theorem \ref{T:2-norm-A-p}. However, neither of them is the precise value of $\|A\|_p$. Is it possible to provide a simple closed formula for $\|A\|_p$?

\item We also encountered the following description of $A(n,-a,b)$ which comes from harmonic analysis. This observation provides another upper bound for $\|A\|_p$. Let $\mathcal{P}_{n}$ denote the space of polynomials of degree at most $n$. We can write
\[
A(n,-a,b) = -(a+b)I + bK
\]
where $K$ is the $n\times n$ all-ones matrix. We may interpret $K$ as an operator on $\mathcal{P}_{n-1}$. More explicitly, for each polynomial $f(z)=a_0+a_1z+\cdots+a_{n-1}z^{n-1} \in \mathcal{P}_{n-1}$, we have
\[
(Kf)(z) = (a_{0}+a_{1}+\cdots+a_{n-1}) \varphi(z),
\]
where
\[
\varphi(z)=1+z+\cdots+z^{n-1}.
\]
Note that as a consequence of this interpretation, i.e., making a correspondence between $f \in \mathcal{P}_{n-1}$ and the vector $(a_0,a_1,\dots,a_{n-1}) \in \mathbb{R}^n$, we have
\[
\|f\|_p = (|a_0|^p+|a_1|^p+\dots+|a_{n-1}|^p)^{1/p}.
\]
Moreover, as another integral representation, it is also straightforward to see that
\[
(Kf)(z) = \varphi(z) \int_{0}^{2\pi} f(e^{i\theta}) \overline{\varphi(e^{i\theta})} \, \frac{d\theta}{2\pi}.
\]
Therefore, we immediately see that
\begin{eqnarray*}
\|K\|_{p} &=& \sup_{f\in \mathcal{P}_{n-1}} \frac{\|Kf\|_{p}}{\|f\|_{p}}\\
&=& \sup_{f\in \mathcal{P}_{n-1}} \frac{\|\varphi\|_{p}}{\|f\|_{p}} \left| \int_{0}^{2\pi} f(e^{i\theta}) \overline{\varphi(e^{i\theta})} \, \frac{d\theta}{2\pi} \right|.
\end{eqnarray*}
Now, on one hand, $\|\varphi\|_{p} = n^{1/p}$ and, on the other hand,
\begin{eqnarray*}
\left| \int_{0}^{2\pi} f(e^{i\theta}) \overline{\varphi(e^{i\theta})} \, \frac{d\theta}{2\pi} \right|
&\leq& \int_{0}^{2\pi} |f(e^{i\theta}) \varphi(e^{i\theta})| \, \frac{d\theta}{2\pi}\\
&\leq& \int_{0}^{2\pi} \|f\|_{p} n^{1/q}|\varphi(e^{i\theta})| \, \frac{d\theta}{2\pi}\\
&=& n^{1/q} \|f\|_{p} \|\varphi\|_{L^1(\mathbb{T})},
\end{eqnarray*}
where
\[
\|\varphi\|_{L^1(\mathbb{T})} = \int_{0}^{2\pi} |\varphi(e^{i\theta})| \, \frac{d\theta}{2\pi}.
\]
Therefore,
\[
\|K\|_{p} \leq n \|\varphi\|_{L^1(\mathbb{T})},
\]
which gives the upper estimate
\[
\|A(n,-a,b)\|_p \leq a+b + bn \|\varphi\|_{L^1(\mathbb{T})}.
\]
Does the above estimation, or its variations, lead to a precise formula for $\|A\|_p$?
\end{enumerate}

\section*{Funding}

This work was partially supported by a research grant from NSERC (Canada), a scholarship from FRQNT (Qu\'ebec), a Ramanujan Fellowship, and a SwarnaJayanti Fellowship from SERB and DST (Govt. of India).

\section*{Acknowledgements}
The authors are grateful to the anonymous referee for the valuable comments, and constructive remarks that improved the quality of the article.

\bibliographystyle{tfnlm}
\bibliography{biblio}

\end{document}